\numberwithin{equation}{section}
\newtheorem{Theorem}{Theorem}[section]
\newtheorem*{Theorem*}{Theorem}
\newtheorem{Lemma}[Theorem]{Lemma}
\theoremstyle{definition}
\newtheorem{Definition}[Theorem]{Definition}
\newtheorem{Remark}[Theorem]{Remark} }
\begin{document}
\allowdisplaybreaks

\renewcommand{\thefootnote}{}

\newcommand{\arXivNumber}{2306.00590}

\renewcommand{\PaperNumber}{102}

\FirstPageHeading

\ShortArticleName{A Note on the Spectrum of Magnetic Dirac Operators}

\ArticleName{A Note on the Spectrum of Magnetic Dirac Operators\footnote{This paper is a~contribution to the Special Issue on Global Analysis on Manifolds in honor of Christian B\"ar for his 60th birthday. The~full collection is available at \href{https://www.emis.de/journals/SIGMA/Baer.html}{https://www.emis.de/journals/SIGMA/Baer.html}}}

\Author{Nelia CHARALAMBOUS~$^{\rm a}$ and Nadine GROSSE~$^{\rm b}$}

\AuthorNameForHeading{N.~Charalambous and N.~Gro{\ss}e}

\Address{$^{\rm a)}$~Department of Mathematics and Statistics, University of Cyprus, Nicosia, 1678, Cyprus}
\EmailD{\href{mailto:charalambous.nelia@ucy.ac.cy}{charalambous.nelia@ucy.ac.cy}}

\Address{$^{\rm b)}$~Mathematisches Institut, Universit\"at Freiburg, 79100 Freiburg, Germany}
\EmailD{\href{mailto:nadine.grosse@math.uni-freiburg.de}{nadine.grosse@math.uni-freiburg.de}}

\ArticleDates{Received June 02, 2023, in final form December 14, 2023; Published online December 22, 2023}

\Abstract{In this article, we study the spectrum of the magnetic Dirac operator, and the magnetic Dirac operator with potential over complete Riemannian manifolds. We find sufficient conditions on the potentials as well as the manifold so that the spectrum is either maximal, or discrete. We also show that magnetic Dirac operators can have a dense set of eigenvalues.}

\Keywords{Dirac operator; potentials; spectrum}

\Classification{58J50; 35P05; 53C27}

\renewcommand{\thefootnote}{\arabic{footnote}}
\setcounter{footnote}{0}

\section{Introduction}

In this article, we consider the magnetic Dirac operator, and the magnetic Dirac operator with potential over a complete Riemannian manifold with an associated Clifford bundle. Our main goal is to study the spectral properties of these operators depending on the behavior of both the magnetic potential and the additional potentials. In particular, we are interested in finding sufficient conditions on the potentials so that the essential spectrum of the operator is either maximal, in other words ${\mathbb R}$, or discrete.

The spectral properties of magnetic Schr\"odinger operators, over $\mathbb{R}^2$ and $\mathbb{R}^3$ have been extensively studied, see, for example, \cite{CFKS, Helffer, shi} and references therein. In this case, the operators are classical Schr\"odinger operators (the Laplacian with a scalar-valued potential) plus a magnetic field which is a vector field acting via some Clifford-type action on $\mathbb C^2$-valued $L^2$-integrable functions over the Euclidean space. In \cite{CFKS}, Cycon, Froese, Kirsch and Simon show that whenever the potential is relatively compact with respect to the
Laplace operator and the magnetic field vanishes at infinity, the spectrum of the magnetic Schr\"odinger operator coincides with the spectrum of the Laplacian and is $[0,\infty)$, it is in other words maximal \cite[Theorem~6.1]{CFKS}. On the other hand, Miller and Simon show that depending on the decay rate of the magnetic field, the Hamiltonian operator of a spinless particle can have purely absolutely continuous spectrum, dense point spectrum in $[0,\infty)$, or dense point spectrum in a closed interval and absolutely continuous spectrum in its complement \cite{MS} (see also \cite[Theorem~6.2]{CFKS}). The case of `pure' Schr\"odinger operators over Euclidean spaces was extensively studied (see, for example, \cite{Agmon, simon}).

The case of the Dirac operator with varying types of potentials over Euclidean spaces was considered in \cite{HNW, Yamada}. General notes on the occurrence and meaning of different type of potentials for the Dirac operator in ${\mathbb R}^{1,3}$ and discussions on self-adjointness can be found in \cite[Sections~4.2 and~4.3]{Thaller}, see also Remark~\ref{rem_pot_types}. Yamada showed for example in~\cite{Yamada} that if the mass-type potential becomes unbounded at infinity and its derivative as well as the one of the electric potential do not grow faster than the potential itself, then the spectrum will be purely discrete.

While several results for Schr\"odinger operator both with scalar potential and with magnetic fields were transferred to manifolds, see, e.g., \cite{shi2, Shubin}, to the best of our knowledge little has been done for magnetic Dirac operators over complete Riemannian manifolds and their spectrum. Over compact odd-dimensional spin manifolds, with certain restrictions on the eigenvalues of the contact endomorphism, Savale proves Weyl-type of estimates for the number of eigenvalues of the coupled Dirac operator when the magnetic form is a contact 1-form, and he also provides a limit formula for the eta invariant of the magnetic Dirac operator when the magnetic potential is a non-resonant contact form in \cite{sav, sav2}. In the context of compact manifolds with boundary, spectral estimates for Callias-type Dirac operators that can be interpreted as Dirac operators with mass-type potentials were studied for example by Cecchini and Zeidler in \cite{CZ}.

In this short note, we start by generalizing the small selection of the theorems in~\cite{CFKS} and~\cite{Yamada} which we mentioned above, to a more general class of complete Riemannian manifolds with Dirac-type operators.

First, we give some preliminaries on the different types of potentials for Dirac operators. We restrict ourselves to potentials that are functions or one-forms acting via Clifford multiplication as in Remark~\ref{rem_diff_conn}. Then, in Theorem~\ref{thm_pot} we prove that the spectrum of the magnetic Dirac operator is discrete if the mass-type potential goes to infinity at infinity and dominates the magnetic and electric potential as well as the derivatives of all these potentials. On the other hand, in Theorem~\ref{thm_mag_zero} we show that a pure magnetic Schr\"odinger operator over asymptotically flat manifolds with magnetic field vanishing at infinity has essential spectrum $[0,\infty)$. Moreover, we will see in Theorem~\ref{thm_dense} that similarly to magnetic Schr\"odinger operators, magnetic Dirac operators can also have dense eigenvalues. Our results illustrate that the vast variability in the behavior of the spectrum of Schr\"odinger operators, can be generalized to magnetic Dirac operators with potential over complete manifolds. Certainly, there are many intermediate cases depending on the behaviour of the various potentials at infinity which are not discussed here, and which could merit further investigation.

\section{Preliminaries}
We consider a complete Riemannian manifold $(M^m,g)$. Let $S\to M$ be a Clifford bundle. That means $S$ is a bundle of Clifford modules over $M$ which has a fiberwise Hermitian metric $\langle\cdot,\cdot\rangle$ and a metric connection $\nabla^S$ on $S$ such that the Clifford action of a tangent vector is skew-adjoint with respect to $\langle\cdot,\cdot\rangle$, and $\nabla^S$ is compatible with the Levi-Civita connection on $M$, cf.\ \cite[Definition~3.4]{Roe}. The sections of $S$ are called spinors and the space of smooth section will be denoted by $\Gamma(S)$.	
The Riemannian metric induces an $L^2$-inner product on spinors over $M^n$ by
\[( \varphi, \eta ) := \int_M \langle\varphi, \eta\rangle\, {\rm d}v,\]
where ${\rm d}v$ is the Riemannian measure.

Moreover, there is an associated Dirac operator acting on the sections of $S$ which we denote by $D$. When $M$ is a spin manifold and $S$ is the associated spinor bundle, the associated Dirac operator is known as the \emph{classical Dirac operator}. It is well known that the square Dirac operator satisfies the Weitzenb\"ock formula
\begin{align*}
D^2=\nabla^*\nabla +\mathcal{R},
\end{align*}
where $\mathcal{R}\in \operatorname{End}(S)$ is the \emph{Clifford contraction} which acts as a tensor on spinors. When $M$ is spin and $D$ is the classical Dirac operator, the Clifford contraction is simply a constant multiple of the scalar curvature of the manifold, $\mathcal{R}=\mathrm{scal}/4$. The Weitzenb\"ock formula allows us to treat the square Dirac operator as a Schr\"odinger-type of operator, with the Clifford contraction as its potential. Controlling this potential allows us to then obtain analytical and spectral properties for the operator. When one considers the classical Dirac operator, this process reduces to controlling the scalar curvature of the manifold.

Next we present the different types of potentials. We introduce the notation step by step, since for magnetic and electric potentials less structure is needed than for mass-type potentials.

\subsection{Magnetic Dirac operators}
Let $(M^m,g)$ be a Riemannian manifold with Clifford bundle $S$ and associated Dirac operator~$D$. Let $A\in \Omega^1(M)$ be a smooth real-valued one-form on $M$.

The \emph{magnetic Dirac operator} with \emph{magnetic potential} $A$ is defined as
\[ D_{A}:= D+ {\mathrm{i}}A\cdot \colon \ \Gamma(S)\to \Gamma(S) \]
and the \emph{spinorial magnetic connection Laplacian} as
\[ H_{A} := \bigl(\nabla^S + {\mathrm{i}}A\bigr)^*\bigl(\nabla^S + {\mathrm{i}}A \bigr) \colon \ \Gamma(S)\to \Gamma(S),\]
where $A\colon \phi\in \Gamma(S)\mapsto A\otimes \phi \in \Gamma(T^*M\otimes S)$ and $A\cdot \colon \phi \in \Gamma(S) \mapsto A\cdot \phi \in \Gamma(S)$ is given by the Clifford multiplication.

\begin{Remark}\label{rem_diff_conn} We note that adding the ${\mathrm{i}}A$-term is a special type of change of connection for the bundle: In general, if $\nabla^1$ and $\nabla^2$ are two connections on the same bundle $S$, then $\nabla^1-\nabla^2=\omega\in \Omega^1 (M, \operatorname{End} S)$. Adding ${\mathrm{i}}A$ corresponds to the special choice $\omega s:= {\mathrm{i}} A\otimes s$ for all $s\in \Gamma(S)$. Since $\langle {\mathrm{i}}A(X)\phi,\psi\rangle=-\langle\phi, {\mathrm{i}}A(X) \psi\rangle$ for all $p\in M$, $X\in T_p M$ and $\phi, \psi\in S_p$, the connection $\nabla +{\mathrm{i}}A$ is also metric. Moreover, it is still compatible with the Clifford action.
The spinc construction is related to the above setting. In that case $A$ is the connection on an auxiliary line bundle which is tensored to the (locally) spinor bundle. In principle, we could also generalize the potentials used here and in the following by using bundles constructed from twisting the spinor bundle. But here we restrict to the case of no additional bundle.\looseness=-1\end{Remark}

The operators $D_A$ and $H_A$ are essentially self-adjoint on complete manifolds~\cite[pp.~411--412]{Chernoff}.

Next we calculate the Lichnerowicz formula (see, for example, \cite{LM,Roe}) for the explicit choice of the Dirac operator which we will be using in this article.

\begin{Lemma}\label{lem_DAsquare} It is
\[ D_A^*D_A = H_A + \mathcal{R} + {\mathrm{i}} \, {\rm d}A\cdot\qquad \text{and}\qquad H_{A} = \bigl(\nabla^S\bigr)^*\nabla^S -2{\mathrm{i}}  \nabla_A^S -{\mathrm{i}}  \operatorname{div} A +|A|^2, \]
where $\mathcal{R}\in \operatorname{End}(S)$ is the Clifford contraction and where for a local orthonormal frame $e_i$ of $TM$ the action of ${\rm d}A\cdot$ is given by \[{\rm d}A\cdot \phi := \sum_{i<j} {\rm d}A(e_i,e_j)e_i\cdot e_j\cdot \phi.\]
\end{Lemma}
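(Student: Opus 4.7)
The plan is to compute both sides in a local orthonormal frame $\{e_i\}$ which is synchronous (parallel) at a fixed point $p\in M$, so that covariant derivatives of frame vectors vanish at $p$. All identities are tensorial, so it suffices to verify them at such a point.

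For the second identity, I would expand $H_A\phi=(\nabla^S+{\mathrm{i}}A)^*(\nabla^S+{\mathrm{i}}A)\phi$ pointwise. Using the standard formula $(\nabla^S)^*\nabla^S\phi=-\sum_i \nabla^S_{e_i}\nabla^S_{e_i}\phi$ at $p$, one obtains
\[
H_A\phi = -\sum_i\bigl(\nabla^S_{e_i}+{\mathrm{i}}A(e_i)\bigr)\bigl(\nabla^S_{e_i}+{\mathrm{i}}A(e_i)\bigr)\phi
\]
and expanding the product produces four terms: $(\nabla^S)^*\nabla^S\phi$, two cross terms that together give $-2{\mathrm{i}}\sum_i A(e_i)\nabla^S_{e_i}\phi=-2{\mathrm{i}}\nabla^S_A\phi$ after using the Leibniz rule on one of them and recognizing $-{\mathrm{i}}\sum_i e_i(A(e_i))=-{\mathrm{i}}\operatorname{div} A$, and finally $-\sum_i({\mathrm{i}}A(e_i))^2\phi=|A|^2\phi$.

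For the first identity, I would write $D_A^2\phi=D^2\phi+\bigl(D\circ{\mathrm{i}}A\cdot+{\mathrm{i}}A\cdot\circ D\bigr)\phi+({\mathrm{i}}A\cdot)^2\phi$. The Weitzenböck formula handles $D^2=(\nabla^S)^*\nabla^S+\mathcal{R}$. The term $({\mathrm{i}}A\cdot)^2=-A\cdot A\cdot=|A|^2$ follows from the Clifford relation $A\cdot A\cdot=-|A|^2\operatorname{id}$. The key computation is the cross term: using compatibility $\nabla^S_{e_i}(A\cdot\phi)=(\nabla_{e_i}A)\cdot\phi+A\cdot\nabla^S_{e_i}\phi$, one gets
\[
D({\mathrm{i}}A\cdot\phi)+{\mathrm{i}}A\cdot D\phi
={\mathrm{i}}\sum_i e_i\cdot(\nabla_{e_i}A)\cdot\phi+{\mathrm{i}}\sum_i(e_i\cdot A+A\cdot e_i)\cdot\nabla^S_{e_i}\phi.
\]
The Clifford anticommutation relation $e_i\cdot A+A\cdot e_i=-2A(e_i)$ turns the second sum into $-2{\mathrm{i}}\nabla^S_A\phi$. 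For the first sum, I would split into diagonal and off-diagonal parts and use $e_i\cdot e_i=-1$ together with $(\nabla_{e_i}A)(e_j)-(\nabla_{e_j}A)(e_i)={\rm d}A(e_i,e_j)$ to rewrite
\[
\sum_i e_i\cdot(\nabla_{e_i}A)\cdot = {\rm d}A\cdot-\operatorname{div} A.
\]
Putting everything together, $D_A^2=(\nabla^S)^*\nabla^S+\mathcal{R}+{\mathrm{i}}\,{\rm d}A\cdot-{\mathrm{i}}\operatorname{div} A-2{\mathrm{i}}\nabla^S_A+|A|^2$, and recognizing the bracket as $H_A$ via the second identity finishes the proof.

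The main obstacle is purely bookkeeping: tracking the sign conventions in Clifford multiplication ($X\cdot X=-|X|^2$) consistently, correctly identifying the one-form $A$ with its metric-dual vector field when writing $\nabla^S_A$ and $|A|^2$, and making sure the antisymmetric part of $\nabla A$ indeed assembles into ${\rm d}A$ with the right factor. No genuine analytic subtlety is involved since all manipulations are algebraic at a single point in a synchronous frame.
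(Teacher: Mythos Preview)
Your proposal is correct and follows essentially the same approach as the paper: a pointwise computation in a synchronous local orthonormal frame, expanding $D_A^2$ via the Weitzenb\"ock formula plus cross terms, and expanding $H_A$ directly from the definition of the adjoint. Your write-up is in fact slightly more explicit than the paper's about how the Clifford anticommutation relation and the splitting of $\sum_i e_i\cdot(\nabla_{e_i}A)\cdot$ into ${\rm d}A\cdot-\operatorname{div}A$ produce the individual terms, but the underlying argument is identical.
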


We note that $H_A$ is not of standard Schr\"odinger type, since a first derivative $\nabla_A^S$ appears.
\begin{proof}We perform a local calculation at a given point $p\in M$. In a neighbourhood of $p$, we choose a local orthonormal frame $\{e_i\}_i$ such that $\nabla_{e_i}e_j(p)=0$ for all $i$, $j$.
Let $A=\sum_i A_i e_i^\flat$, where $\bigl\{e_i^\flat\bigr\}_i$ denotes the dual cotangent frame. Using the fact that $D_A^*=D_A$, we compute, pointwise,
\samepage{\begin{align*}
D_A^*D_A \phi &= (D+{\mathrm{i}}A\cdot )(D+{\mathrm{i}}A\cdot) \phi \\[1.5ex]
&= D^*D \phi +{\mathrm{i}}  e_i\cdot \nabla^S_{e_i} (A_j e_j\cdot \phi ) +{\mathrm{i}}A\cdot D\phi - \sum_{i,j} A_iA_j e_i\cdot e_j \cdot \phi \\
&= \bigl(\nabla^S\bigr)^*\nabla^S\phi+\mathcal{R}\phi +{\mathrm{i}}\,  {\rm d}A\cdot \phi -{\mathrm{i}}  \operatorname{div} A  \phi - 2{\mathrm{i}}\sum_{i} A_i \nabla^S_{e_i}\phi +|A|^2\phi.
\end{align*}}	
Note, that $A^*(e_i\otimes \phi)= A_i\phi$ and $\bigl(\nabla^S\bigr)^*(e_i\otimes \phi)=-\nabla^S_{e_i}\phi$. Hence,
\begin{align*}
H_{A}\phi = \bigl(\nabla^S + {\mathrm{i}}A\bigr)^*\bigl(\nabla^S + {\mathrm{i}}A \bigr) \phi
= \bigl(\nabla^S\bigr)^*\nabla^S\phi -2{\mathrm{i}}\sum_i A_i \nabla^S_{e_i}\phi -{\mathrm{i}}\operatorname{div} A \phi +|A|^2\phi. \tag*{\qed}
\end{align*}\renewcommand{\qed}{}
\end{proof}

\subsection{With electric potential}
Instead of only working with magnetic potentials we can also add an electric potential $A_0$, in other words a function on $M$ and consider the more general operator
\[ D_{A, A_0}=D_A+A_0.\]
\begin{Remark}\label{rem_pot_types}
Above $A$ is interpreted as the magnetic potential and $A_0$ as the electric potential which can be seen as follows: If we work on ${\mathbb R}^{1,3}$, with coordinates $x^0, \ldots, x^3$ the (physical) Dirac operator with potential $A=A_\mu {\rm d}x^\mu$ looks like ${\mathrm{i}}\gamma_\mu (\partial_\mu + {\mathrm{i}}A_\mu)$ (by borrowing the notation from physics using gamma-matrices, cf.\ \cite{Thaller}). A spinor in its kernel satisfies ${\mathrm{i}}\gamma_0 \partial_t \phi = \bigl(-{\mathrm{i}} \vec{\gamma}\vec{\nabla} + \vec{\gamma} \vec{A} + \gamma_0A_0\bigr)\phi$, where the right hand-side is $D_{A,A_0}\phi$ as defined above. If $A$ itself does not depend on time, $A_0$ gives the part of the electric field in $F={\rm d}A$ and $(A_1, A_2, A_3)$ the magnetic field (also compare to \cite[Section~4.2]{Thaller}).
\end{Remark}

\begin{Remark}\label{rem:ess_sa} Let $V$ be a zero order symmetric operator on $S$ acting as $(V\phi)(x)=V(x)\phi(x)$ with $V(x)\in \mathrm{Hom} (S_x,S_x)$. Assume that for each point $p\in M$ there is a $\tilde{V}_p\in L^2_{\rm{loc}}$ that coincides with $V$ in a neighbourhood of $p$ and such that $D_A+V$ is essentially self-adjoint on $C_c^\infty(S)$. Then by \cite[Theorem~2.1]{Chernoff2}, $D_{A,V}$ is essentially self-adjoint.

In particular, this is true for $V$ smooth~\cite[Theorem~2.2]{Chernoff}.
In case the potential is singular, the essential self-adjointness depends both on how fast the singularity blows up as well as the coupling constant, in other words the scalar multiple in front of it (see \cite[Section~4.3.3]{Thaller}). Here we will only consider $C^1$-potentials for simplicity.
\end{Remark}

\subsection{With (mass-type) potential}\label{sec:potential}
Let again $(M^m,g)$ be a Riemannian manifold with Clifford bundle $S$ and associated Dirac operator $D$.

Assume that $S$ is $\mathbb Z_2$-graded, in other words there is a parallel and orthogonal $\mathbb Z_2$-grading $S=S^+\oplus S^-$ and the Clifford multiplication by a tangent vector is an odd map with respect to this grading. Then the associated Dirac operator has the form
\[ D= \begin{pmatrix} 0 & D^-\\ D^+ & 0
\end{pmatrix}\]
with respect to this splitting. Moreover, we define $\nu\cdot (\phi,\psi)= {\mathrm{i}}  (\phi, -\psi)$. This corresponds to a choice of representation of the Clifford action as in \cite{Ginoux,Thaller}. Then, $\nu\cdot$ commutes with Clifford multiplication for all $X\in \Gamma(TM)$, since it is an odd map. Then we define for $V\in L^2_{\rm{loc}}$,
\[ D_{A,V, A_0}:= D+ \mathrm{i} A\cdot + {\mathrm{i}}V \nu\cdot + A_0\colon \ \Gamma\bigl(\tilde{S}\bigr)\to \Gamma\bigl(\tilde{S}\bigr).\]
If $A_0=0$, we write $D_{A,V}:= D_{A,V,0}$. If $A=0$ and $A_0=0$, we write $D_V:= D_{0,V,0}$.
\begin{Remark}
Let $M$ be spin (with a chosen spin structure) and $S$ the spinor bundle. In case $\dim M$ is even, the spinor bundle is automatically $\mathbb Z_2$-graded. Then $\iota^*S$, $\iota\colon M\hookrightarrow M\times {\mathbb R}$, is the spinor bundle of $M\times {\mathbb R}$ and the $\nu\cdot $ from above is the Clifford multiplication by the unit vector in the ${\mathbb R}$ direction. In case $\dim M$ is odd, then $\tilde{S}:= S\oplus S$ and $X\cdot (\phi, \psi) = (X\cdot \phi, -X\cdot \psi)$ for $X\in \Gamma(TM)$ defines a $\mathbb Z_2$-graded Clifford bundle as above, cf.\ \cite[Section~1.4]{Ginoux}.
\end{Remark} 	
For ${\mathbb R}^{1,3}$, in other words for the manifold $M={\mathbb R}^3$ with ${\mathbb C}^4$-valued spinors, the $V$-term above corresponds to the scalar potential in \cite[Section~4.2.1]{Thaller}.
Over Riemannian manifolds the operator $D+{\mathrm{i}}V \nu\cdot$ is often referred to as a Callias-type operator, see \cite{CZ}.

For the essential self-adjointness Remark~\ref{rem:ess_sa} is applicable.
\begin{Lemma} \label{DAV_square}For $V\in C^1(M)$, we have $D_{A,V}^*D_{A,V}= D_A^*D_A + {\mathrm{i}}\,  {\rm d}V\cdot \nu\cdot +V^2$.
\end{Lemma}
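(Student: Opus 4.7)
The strategy is to use that $D_{A,V}$ is essentially self-adjoint (by Remark~\ref{rem:ess_sa}, applicable since $V\in C^1$), so $D_{A,V}^*D_{A,V}=D_{A,V}^2$, and likewise $D_A^*D_A=D_A^2$. The task thus reduces to computing the square of $D_{A,V}=D_A+\mathrm{i}V\nu\cdot$ and identifying the cross terms.

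The plan is to expand pointwise
\[
D_{A,V}^2\phi = D_A^2\phi + D_A(\mathrm{i}V\nu\cdot\phi) + \mathrm{i}V\nu\cdot D_A\phi + (\mathrm{i}V\nu\cdot)^2\phi
\]
and treat each piece in turn. For the last term, the definition $\nu\cdot(\phi,\psi)=\mathrm{i}(\phi,-\psi)$ gives immediately $\nu\cdot\nu\cdot=-\mathrm{id}$, hence $(\mathrm{i}V\nu\cdot)^2=V^2$, which produces one of the two required terms.

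For the two middle terms I would work at a point $p$ in a synchronous frame $\{e_i\}$ with $\nabla_{e_i}e_j(p)=0$, as in the proof of Lemma~\ref{lem_DAsquare}. The two key structural inputs are: (i) the grading $S=S^+\oplus S^-$ is parallel, so $\nu\cdot$ commutes with $\nabla^S$; and (ii) because Clifford multiplication by tangent vectors is odd with respect to the grading while $\nu\cdot$ is even (acting as $\pm\mathrm{i}$ on $S^\pm$), $\nu\cdot$ anticommutes with $X\cdot$ for any $X\in\Gamma(TM)$ (i.e., it commutes in the $\mathbb{Z}_2$-graded sense, as referenced after the definition of $\nu\cdot$). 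Using (i) and (ii), a direct computation yields
\[
D(V\nu\cdot\phi) + V\nu\cdot D\phi = \sum_i e_i\cdot\nabla^S_{e_i}(V\nu\cdot\phi) + V\nu\cdot D\phi = (\mathrm{grad}\,V)\cdot\nu\cdot\phi,
\]
because the term involving $V$ itself cancels against $V\nu\cdot D\phi$ by the anticommutation of $e_i\cdot$ and $\nu\cdot$. Multiplying by $\mathrm{i}$ gives the desired $\mathrm{i}\,dV\cdot\nu\cdot$ after identifying $\mathrm{grad}\,V$ with $dV$ via the metric.

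It remains to verify that the contribution involving the magnetic potential $A$ cancels. Writing out $\mathrm{i}A\cdot(\mathrm{i}V\nu\cdot\phi) + \mathrm{i}V\nu\cdot(\mathrm{i}A\cdot\phi) = -VA\cdot\nu\cdot\phi - V\nu\cdot A\cdot\phi$, the same anticommutation rule $A\cdot\nu\cdot=-\nu\cdot A\cdot$ makes this vanish. Collecting the four pieces completes the identity $D_{A,V}^*D_{A,V}=D_A^*D_A+\mathrm{i}\,dV\cdot\nu\cdot+V^2$. The only subtle point is keeping the signs straight in the (anti)commutation of $\nu\cdot$ with Clifford multiplication; once this is fixed, every calculation is a one-line consequence of the pointwise algebra and the parallelism of the grading.
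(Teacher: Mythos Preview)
Your proof is correct and follows essentially the same route as the paper: both expand $(D_A+\mathrm{i}V\nu\cdot)^2$, use $(\nu\cdot)^2=-\mathrm{id}$ for the $V^2$ term, cancel the $A$--$\nu$ cross terms via $A\cdot\nu\cdot=-\nu\cdot A\cdot$, and obtain $\mathrm{i}\,\mathrm{d}V\cdot\nu\cdot$ from the Leibniz rule together with the anticommutation of $D$ and $\nu\cdot$. One cosmetic remark: you need not invoke essential self-adjointness, since $D_A$ and $\mathrm{i}V\nu\cdot$ are already formally self-adjoint, and your parenthetical about ``graded commutation'' is slightly off (for an even operator against an odd one the graded bracket is the ordinary commutator, whereas what you actually use---correctly---is genuine anticommutation).
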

\begin{proof}It is
\begin{align*}
D_{A,V}^*D_{A,V} &=D_A^*D_A - V A\cdot \nu\cdot -V\nu\cdot A\cdot +{\mathrm{i}}V\nu\cdot D +{\mathrm{i}}  D(V\nu\cdot ) +V^2 \\
& =D_A^*D_A + {\mathrm{i}}\,  {\rm d}V\cdot \nu\cdot +V^2,
\end{align*}
where we use the anticommutativity between $D$ and Clifford multiplication by $\nu$ in the last\linebreak line.\end{proof}

\begin{Remark} If $V={\rm const}$, then ${\mathrm{i}}V\nu \cdot$ can be interpreted as a mass term and $D_{A,V}$ as a massive (magnetic) Dirac operator. In this case, the term ${\rm d}V$ in $D^*_{A,V}D_{A,V}$ vanishes and the operator becomes a truly Schr\"odinger-type operator. Thus, in general, such a potential can be interpreted as a position-dependent mass term.\end{Remark}

\subsection{Diamagnetic inequality}
For a metric connection $\nabla$ on a vector bundle, we have $|{\rm d}|\phi||\leq |\nabla \phi|$ for all $\phi\in C^\infty(S)$ and at all points where $\phi\neq 0$.
Since $\nabla +{\mathrm{i}}A$ is just another metric connection on the bundle, the following diamagnetic inequality holds:
\[ |{\rm d}|\phi||\leq |(\nabla + {\mathrm{i}}A )\phi|.\]

\section{Spectral results}
In this section, we will explore whether and how the results of \cite[Section~6]{CFKS} and \cite{Yamada} generalize to complete manifolds under appropriate conditions.

\subsection{Potentials and discrete spectrum}
It is known that the self-adjoint Schr\"odinger operator ${H}_V:= \bigl(\nabla^S\bigr)^*\nabla^S + V^2\colon \operatorname{dom}  H_V\subset L^2(S)\to L^2(S)$ has discrete spectrum whenever the potential term blows up at infinity (condition $(i)$ in Theorem \ref{thm_pot}) since the quadratic form associated to the operator can be made as large as possible on the complement of a large enough ball (see, for example, \cite[Theorem~XIII.16]{RS4}). In~this section we show that this result also holds for the magnetic Dirac operator with potential under the additional assumption that the magnetic potential is bounded and that the derivative of the magnetic potential and the potential $A_0$ is dominated by~$V$.

We note that having a magnetic and/or electric potential that goes to infinity at infinity is not enough to ensure that the spectrum is discrete.

In the following let $|x|:= d(x,p)$ for a fixed $p\in M$.

\begin{Theorem}\label{thm_pot} Assume that $M$ is complete, connected and noncompact. Let $S$ be a $\mathbb Z_2$-graded Clifford bundle as in Section~$\ref{sec:potential}$. Let $V, A_0\in C^1(M)$ and $A\in \Omega^1(M)$.
Assume
\begin{enumerate}[$(i)$]\itemsep=0pt
		\item $\lim_{|x|\to \infty} |{V}(x)|=\infty$;
		\item $\mathcal{R}$ and $A$ are both bounded;
		\item $|{\rm d}V| + |{\rm d}A| + |{\rm d}A_0|+ |\operatorname{div} A|= \mathcal{O}(V)$ as $|x|\to \infty$;
		\item $|A_0|\leq \epsilon |V|$ on $|x|\geq R_0$ for an $\epsilon <1$ and some $R_0>0$.
\end{enumerate}
Then the spectrum of $D_{A,V, A_0}= D +{\mathrm{i}}A\cdot + {\mathrm{i}}V\nu \cdot +A_0$ is discrete.
\end{Theorem}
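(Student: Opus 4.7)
The plan is to show that $D_{A,V,A_0}^2$ has empty essential spectrum by a Persson-type criterion; discreteness of $\sigma\bigl(D_{A,V,A_0}^2\bigr)$ then transfers to $D_{A,V,A_0}$ via the spectral theorem. Since $D_{A,V,A_0}$ is essentially self-adjoint on $C_c^\infty(S)$ by Remark~\ref{rem:ess_sa} and $D_{A,V,A_0}^2\geq 0$, it suffices to verify that
\[
\Lambda(R):=\inf\bigl\{\|D_{A,V,A_0}\phi\|^2:\phi\in C_c^\infty(S),\ \operatorname{supp}\phi\subset M\setminus B_R(p),\ \|\phi\|=1\bigr\}\xrightarrow[R\to\infty]{}\infty.
\]

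The computational input is the Lichnerowicz identity. Combining Lemmas~\ref{lem_DAsquare} and~\ref{DAV_square}, for $\phi\in C_c^\infty(S)$,
\[
\|D_{A,V}\phi\|^2=\bigl\|\bigl(\nabla^S+{\mathrm{i}}A\bigr)\phi\bigr\|^2+(\mathcal{R}\phi,\phi)+({\mathrm{i}}\,{\rm d}A\cdot\phi,\phi)+({\mathrm{i}}\,{\rm d}V\cdot\nu\cdot\phi,\phi)+\|V\phi\|^2,
\]
and the first (kinetic) summand is nonnegative. Set $c(R):=\inf_{|x|\geq R}|V(x)|$, which tends to $\infty$ by (i), so $\|\phi\|\leq c(R)^{-1}\|V\phi\|$ for $\phi$ supported in $M\setminus B_R$. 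Hypothesis~(ii) gives $|(\mathcal{R}\phi,\phi)|\leq C\|\phi\|^2\leq Cc(R)^{-2}\|V\phi\|^2$. By (iii), for any $\eta>0$ there exists $R_\eta$ with $|{\rm d}A|+|{\rm d}V|\leq\eta|V|$ outside $B_{R_\eta}$; Cauchy--Schwarz then bounds both cross terms above by $C\eta\|V\phi\|\|\phi\|\leq C\eta c(R)^{-1}\|V\phi\|^2$. Consequently, for any $\delta>0$ and all sufficiently large $R$,
\[
\|D_{A,V}\phi\|^2\geq(1-\delta)\|V\phi\|^2.
\]

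Next I would bring in the electric potential via the reverse triangle inequality
\[
\|D_{A,V,A_0}\phi\|=\|D_{A,V}\phi+A_0\phi\|\geq\bigl|\,\|D_{A,V}\phi\|-\|A_0\phi\|\,\bigr|,
\]
together with (iv), which gives $\|A_0\phi\|\leq\epsilon\|V\phi\|$ once $R\geq R_0$. Since $\epsilon<1$, we may fix $\delta$ so small that $\sqrt{1-\delta}>\epsilon$, and obtain
\[
\|D_{A,V,A_0}\phi\|^2\geq\bigl(\sqrt{1-\delta}-\epsilon\bigr)^2\|V\phi\|^2\geq\bigl(\sqrt{1-\delta}-\epsilon\bigr)^2c(R)^2\|\phi\|^2,
\]
so $\Lambda(R)\to\infty$ and the spectrum is discrete.

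The main obstacle is the interaction between $A_0$ and $D_{A,V}$: naively expanding $\|(D_{A,V}+A_0)\phi\|^2$ produces a cross term $2\operatorname{Re}(D_{A,V}\phi,A_0\phi)$ which is not controlled by the Lichnerowicz formula and cannot be absorbed into $\|V\phi\|^2$ by the bound on $|{\rm d}A_0|$ from (iii) alone. The reverse triangle inequality bypasses this difficulty and clarifies why the \emph{strict} inequality $\epsilon<1$ in (iv) is exactly the quantitative hypothesis needed: it ensures that after subtracting $\|A_0\phi\|$ from $\|D_{A,V}\phi\|$ one still retains a term of order $\|V\phi\|$, which diverges by (i). The other cross terms from ${\rm d}A$ and ${\rm d}V$ are absorbed by the growth of $|V|$ through (iii).
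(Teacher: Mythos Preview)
Your argument is correct and takes a genuinely different route from the paper. One small imprecision: condition~(iii) only gives $|{\rm d}A|+|{\rm d}V|\leq C|V|$ for a \emph{fixed} $C$, not for arbitrarily small $\eta$. This does not matter, however, since the resulting bound on the cross terms is $C\|V\phi\|\,\|\phi\|\leq C\,c(R)^{-1}\|V\phi\|^2$, which already tends to zero as $R\to\infty$; the conclusion $\|D_{A,V}\phi\|^2\geq(1-\delta)\|V\phi\|^2$ for large $R$ is unaffected.

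The paper proceeds quite differently: it establishes compactness of $(D_{A,V,A_0}-\mathrm{i})^{-1}$ by applying $D_{A,V,-A_0}+\mathrm{i}$ to the resolvent equation, which produces the auxiliary Schr\"odinger operator $\bigl(\nabla^S\bigr)^*\nabla^S+V^2-A_0^2$ plus lower-order terms; the known discreteness of this Schr\"odinger operator (via \cite[Theorem~XIII.64]{RS4}) then yields the desired compactness through a graph-norm estimate in a weighted Hilbert space. Your approach bypasses the auxiliary operator entirely: you bound the Dirac quadratic form from below directly via the Lichnerowicz identity and handle $A_0$ by the reverse triangle inequality, reducing everything to the Persson-type criterion. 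This is more elementary and, notably, uses strictly less of the hypotheses: you never need $A$ bounded from~(ii), nor the $\mathcal{O}(V)$ control on $|\operatorname{div} A|$ and $|{\rm d}A_0|$ from~(iii), so your argument in fact proves a slightly stronger theorem. The paper's approach, on the other hand, exhibits the resolvent compactness more explicitly and ties the result to the classical Schr\"odinger theory in a transparent way.
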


This generalizes a result for $M={\mathbb R}^3$ with $D={\slashed{D}}$ by \cite{Yamada}, and our proof follows the same idea as in this paper. In \cite{Yamada}, only an electric potential and a mass-type potential are considered.

\begin{proof} In order to prove the theorem, we will show that $(D_{A,V, A_0}-\mathrm{i})^{-1}$ is compact. For that let $\phi_n\in L^2(S)$ be a bounded sequence and set $\psi_n:= (D_{A,V, A_0}-\mathrm{i})^{-1}\phi_n \in H^1_{\rm loc}(S)$.
Since $D_{A,V, A_0}$ is essentially self-adjoint, its spectrum is real and hence $(D_{A,V, A_0}-\mathrm{i})^{-1}$ is bounded. Thus, $\psi_n$ is a bounded sequence in $L^2$. We need to show that this sequence has a convergent subsequence.
	
As mentioned above ${H}_V:= \bigl(\nabla^S\bigr)^*\nabla^S + V^2-A_0^2$ has discrete spectrum. By \cite[Theorem~XIII.64]{RS4}, the set
\[S_b=\bigl\{\phi\in \operatorname{dom}  {H}_V  \subset  L^2(S)\mid \Vert \phi\Vert_{L^2}\leq 1, \bigl(\phi, \bigl(\bigl(\nabla^S\bigr)^*\nabla^S + {V}^2-A_0^2\bigr)\phi\bigr)\leq b \bigr\}\]
is compact for all $b$. We want to show that the sequence $\psi_n$ is in a set $S_b$ for some $b$, then our claim follows by the compactness of this set.
		
We apply $D_{A,V} -A_0 + \mathrm{i}= D_{A,V,-A_0} + \mathrm{i}$ on both sides of $D_{A,V,A_0} \psi_n - {\mathrm{i}}  \psi_n =\phi_n$, and use Lemmas~\ref{DAV_square} and~\ref{lem_DAsquare}, as well as the self-adjointness of $D_{A,V}$ to obtain
\begin{align*}
& \bigl( \bigl(\nabla^S\bigr)^* \nabla^S -2{\mathrm{i}}  \nabla^S_A + |A|^2 + \mathcal{R} -\mathrm{i}  \operatorname{div} A +{\mathrm{i}}\, {\rm d}A\cdot \\
& \qquad {}+ 1+ {\mathrm{i}} \, {\rm d}V\cdot \nu\cdot +V^2 - A_0^2 +2A_0\mathrm{i}+ {\rm d}A_0\cdot \bigr)\psi_n= (D_{A,V, -A_0} +\mathrm{i})\phi_n.\end{align*}
Define
$\mathcal{H} := \bigl\{ \phi \in L^2(S)\mid \Vert \phi\Vert_{{V}}^2:= \Vert \phi\Vert_{H^1}^2 + \bigl\Vert \sqrt{{V^2}-A_0^2}\phi\bigr\Vert_{L^2}^2<\infty \bigr\}$. Then $\mathcal{H}\subset H^1(S)$ is a Hilbert space with inner product $(\phi, \psi)_{{V}}:= (\phi, \psi)_{H^1}+\bigl(({V}^2-A_0^2\bigr)\phi, \psi)_{L^2}$. By $(iv)$, $V^2\leq a\bigl(V^2-A_0^2\bigr)\leq a V^2$ for some $a>1$. Thus, the norms $\Vert \phi\Vert_{{V}}$ and $(\Vert \phi\Vert_{H^1}^2 + |V| \Vert\phi\Vert_{L^2}^2)^\frac{1}{2}$ are equivalent.
	
By assumptions $(iii)$ and $(iv)$, there exist $R>R_0$ and $C>0$ such that $(|{\rm d}V| + |{\rm d}A| + |\operatorname{div} A|+|{\rm d}A_0| +|A_0|)(x)\leq C {V}(x)$ for all $|x|\geq R$. Let $\eta$ be a smooth cut-off function with $\eta(x)=1$ for $|x|\geq R+1$ and $\eta(x)=0$ for $|x|\leq R$. Let $u\in C_c^\infty(S)$. Then	
\begin{align*}
(\eta \psi_n, u)_{V} =&\bigl(\bigl(\nabla^S\bigr)^* \nabla^S (\eta \psi_n), u\bigr)_{L^2}+ \bigl(\bigl(1+V^2-A_0^2\bigr)\eta\psi_n, u\bigr)_{L^2}\\	
=&\bigl(\eta \bigl(\nabla^S\bigr)^* \nabla^S \psi_n, u\bigr)_{L^2}+ \bigl(\bigl(1+V^2\bigr)\eta\psi_n, \psi\bigr)_{L^2}+ \bigl(\psi_n  \Delta \eta -2 \nabla^S_{\nabla\eta} \psi_n, u\bigr)_{L^2}\\
=&\bigl(\phi_n, (D_{A,V,-A_0} -\mathrm{i})(\eta u)\bigr)_{L^2}+ \bigl(\psi_n \Delta \eta - 2 \nabla^S_{\nabla \eta} \psi_n,u\bigr)_{L^2}\\
&+\bigl(\bigl(2{\mathrm{i}}  \nabla^S_A -  |A|^2 -  \mathcal{R} + \mathrm{i}  \operatorname{div} A -  {\mathrm{i}} \, {\rm d}A\cdot -2A_0\mathrm{i}  -{\rm d}A_0\cdot - {\mathrm{i}} \, {\rm d}V \cdot  \nu\cdot \bigr) \psi_n, \eta u\bigr)_{L^2}\\
=&\bigl(\phi_n, (D_{A,V,A_0} -\mathrm{i})(\eta u)\bigr)_{L^2}+ \bigl( \psi_n ,u  \Delta \eta + 2 \nabla^S_{\nabla \eta} u\bigr)_{L^2}\\
&+\bigl(\psi_n, (2{\mathrm{i}}  \nabla^S_A - \mathrm{i}  \operatorname{div} A-  {\mathrm{i}}\,  {\rm d}A \cdot +2A_0\mathrm{i}  + {\rm d}A_0\cdot - {\mathrm{i}}  \, {\rm d}V \cdot  \nu \cdot  - \mathcal{R} -  |A|^2) (\eta u)\bigr)_{L^2}.
\end{align*}
	
Using our assumptions together with $\bigl| \nabla^S_X u\bigr|\leq |X|\bigl| \nabla^S u\bigr|$, we obtain
\begin{align*}
|(\eta \psi_n, u)_{V}| \lesssim \Vert \phi_n\Vert_{L^2}\Vert (D_{A,V,-A_0} -\mathrm{i})(\eta u)\Vert_{L^2} + \Vert \psi_n\Vert_{L^2} \Vert \eta u \Vert_{V}^2
\end{align*}
and, as a result
\begin{align*}
(\eta \psi_n, u)_{V} \leq C(\Vert \phi_n\Vert +\Vert \psi_n\Vert )\Vert u\Vert_{V}\leq C' \Vert u\Vert_{V}.
\end{align*}
Thus, $\Vert \eta \psi_n\Vert_{V}$ is bounded for all $n$. Hence, this sequence lies in the compact subset $S_b$ for some~$b$ and therefore has a convergent subsequence in $L^2(S)$.
\end{proof}

\subsection{Magnetic Schr\"odinger operator with vanishing magnetic field at infinity}

In this section, we prove that the essential spectrum of a magnetic Schr\"odinger operator with vanishing magnetic field at infinity will be maximal over asymptotically flat manifolds. Our result generalizes \cite[Theorem~6.1]{CFKS} who considered the case of Euclidean space. The main idea is to compare the spectrum of the magnetic Schr\"odinger operator to that of certain gauge perturbed operators, using the fact that the magnetic field vanishes at infinity. Since the spectrum of the Laplacian on functions is maximal over asymptotically flat manifolds, and we have `nice' spinors over this space, we can then find an appropriate family of approximate eigenspinors to prove that the essential spectrum of our operator is also maximal.

\begin{Definition}
A manifold $(M^m,g)$ with $m\geq 2$ is asymptotically flat of order $\tau$ if there is a~compact set $K\subset M$ and a diffeomorphism
$\varphi\colon M \setminus K \to \mathbb{R}^m \setminus B_{r_o}(0)$ such that
\begin{gather*}
(\varphi_* g)_{ij} (x) = \delta_{ij} + O\bigl(|x|^{-\tau}\bigr) \qquad \text{and}\\
\partial_k (\varphi_* g)_{ij} (x) = O\bigr(|x|^{-(\tau+1)}\bigr), \qquad \partial_{kl} (\varphi_* g)_{ij} (x)= O\bigl(|x|^{-(\tau+2)}\bigr)
\end{gather*}
for some $\tau>0$, where $|x|$ is the Euclidean distance of $x$ to $0$.
\end{Definition}

Note that under the above assumption on the metric, the curvature tensor of the manifold and hence its scalar curvature tend to zero as $|x|\to \infty$.

To prove that the spectrum of the magnetic Schr\"odinger operator is maximal requires, as we will see, the construction of a large class of $L^2$ integrable approximate eigenspinors. Even though the spectrum of the Laplacian on functions would be $[0,\infty)$ over these manifolds \cite{ChLu2}, we cannot use the bounded test functions with $L^1$ estimates constructed in that article. Instead, we use the idea of \cite{CFKS} choosing gauges such that the magnetic potential becomes small on growing balls, as well as the volume doubling property which holds for asymptotically flat manifolds.
This volume doubling property is defined as follows: There is an $R_o>r_o$ such that for all $R>0$ and $p\in M\setminus \varphi^*\bigl({\mathbb R}^n\setminus B_{3R+R_o}(0)\bigr)$,
\begin{equation} \label{VD}
\operatorname{vol}  B_{2R}(p)\leq C \operatorname{vol}  B_{R}(p),
\end{equation}
where $C$ is a uniform constant, independent of $p$ and $R$. For asymptotically flat manifolds this follows completely analogously to the estimates in \cite[Proof of Proposition~1]{Paeng}. The only difference between our volume doubling property and the statement in \cite{Paeng} is that the balls in the latter are all centered at zero.

\begin{Theorem}\label{thm_mag_zero}
Let $(M^m,g)$ be an asymptotically flat spin Riemannian manifold of order $\tau>0$, with Dirac operator $D$. Let $A\in \Omega^1(M)$ such that $|B={\rm d}A|\to 0$ as $|x|\to \infty$, in other words the magnetic field vanishes at infinity. 	
Then, $[0,\infty) = \sigma_{\mathrm{ess}} (D_A^*D_A)$. In particular, ${\mathbb R} = \sigma_{\mathrm{ess}} (D_A)$ in dimension $0$, $1$, $2$ $\rm {mod}$ $4$.
\end{Theorem}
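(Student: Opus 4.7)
The plan is to prove the two inclusions forming $\sigma_{\mathrm{ess}}(D_A^*D_A) = [0,\infty)$. The upper bound $\sigma_{\mathrm{ess}}(D_A^*D_A) \subset [0,\infty)$ is immediate from $D_A^*D_A = D_A^2 \ge 0$, since $D_A$ is essentially self-adjoint on a complete manifold. For the reverse, given any $\lambda \ge 0$, I will construct a Weyl sequence: unit vectors $\phi_n \in L^2(S)$ with $\phi_n \rightharpoonup 0$ and $\|(D_A^*D_A - \lambda)\phi_n\|_{L^2} \to 0$. The $\phi_n$ will be localized on balls $B_{R_n}(p_n)$ in the asymptotically flat end, with $|p_n| \to \infty$, $R_n \to \infty$, and $R_n \ll |p_n|$.

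The first ingredient is a local gauge fix. On $B_{R_n}(p_n)$, transported via $\varphi$ to a Euclidean ball, the Poincar\'e-type formula
\[
f_n(x) := \int_0^1 (\varphi_*A)_{p_n + t(x-p_n)}(x - p_n)\, dt
\]
yields $\|A - df_n\|_{L^\infty(B_{R_n}(p_n))} \le C R_n \|B\|_{L^\infty(B_{R_n}(p_n))}$, with analogous control of the first derivatives of $A - df_n$ through bounds on $B$. By taking $R_n$ to grow slowly enough compared to the decay of $|B|$, one can arrange all relevant norms of $A' := A - df_n$ (namely $|A'|$, $|\operatorname{div} A'|$, $|A'|^2$) to be $o(1)$ uniformly on $B_{R_n}(p_n)$; if $\nabla B$ is not a priori controlled, one replaces the radial gauge by a Coulomb gauge, imposing $\operatorname{div} A' = 0$ directly. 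Conjugation by $e^{-if_n}$ is unitary and intertwines $D_A$ with $D_{A'}$, so it suffices to build a Weyl sequence $\psi_n$ for $D_{A'}^*D_{A'} - \lambda$ and set $\phi_n := e^{-i f_n}\psi_n$.

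The second ingredient is an approximate plane-wave spinor. Because the chart $\varphi$ trivializes the spinor bundle up to $O(|x|^{-\tau})$ errors, on $B_{R_n}(p_n)$ there is a unit spinor $\Psi_n$ with $|\nabla^S \Psi_n| = O(|p_n|^{-\tau-1})$. Pick $k \in \mathbb{R}^m$ with $|k|^2 = \lambda$ and a cutoff $\chi_n$ supported in $B_{R_n}(p_n)$, identically $1$ on $B_{R_n/2}(p_n)$, with $|d\chi_n| \le C/R_n$, and define $\psi_n := \chi_n\, e^{ik\cdot\varphi}\, \Psi_n$. Applying Lemma~\ref{lem_DAsquare} gives
\[
(D_{A'}^*D_{A'} - \lambda)\psi_n = \bigl((\nabla^S)^*\nabla^S - \lambda\bigr)\psi_n + \bigl(- 2i\nabla^S_{A'} - i\operatorname{div} A' + |A'|^2\bigr)\psi_n + \mathcal{R}\psi_n + i\, B\cdot\psi_n.
\]
On $\operatorname{supp}\psi_n$ the terms $\mathcal{R}$ and $B$ are $o(1)$, the three $A'$-dependent terms are $o(1)$ by the gauge choice, the principal term $((\nabla^S)^*\nabla^S - \lambda)(e^{ik\cdot\varphi}\Psi_n)$ is pointwise $O(|p_n|^{-\tau})$ on $B_{R_n/2}(p_n)$ (from the asymptotically flat metric error together with $|\nabla^S \Psi_n|$), and derivatives falling on $\chi_n$ contribute $O(1/R_n)$ terms supported on the annulus $B_{R_n}(p_n)\setminus B_{R_n/2}(p_n)$. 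The volume doubling property \eqref{VD} ensures $\|\psi_n\|_{L^2}^2 \ge c\operatorname{vol} B_{R_n/2}(p_n) \ge c'\operatorname{vol} B_{R_n}(p_n)$, so after normalizing, every error term vanishes. Weak convergence $\psi_n \rightharpoonup 0$ is automatic since the supports escape every compact set.

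The principal technical obstacle is the three-way calibration of $|p_n|$, $R_n$, and the gauge size $\epsilon_n := R_n \|B\|_{L^\infty(B_{R_n}(p_n))}$: $R_n$ must grow fast enough that cutoff errors $1/R_n$ vanish, slowly enough that $\epsilon_n \to 0$, and $|p_n|$ must outpace $R_n$ so that the $|p_n|^{-\tau}$ errors in the metric, connection, and scalar curvature are dominated. Because $|B|$, $|\mathcal{R}|$, and the local metric deviation from Euclidean all tend to zero at infinity, a diagonal argument selects compatible sequences. For the concluding statement, $\sigma_{\mathrm{ess}}(D_A^2) = [0,\infty)$ together with the spectral mapping $\sigma(D_A^2) = \sigma(D_A)^2$ for self-adjoint $D_A$ shows that $\sigma_{\mathrm{ess}}(D_A)$ meets $\{\pm\sqrt\mu\}$ for each $\mu \ge 0$; in dimensions $m \equiv 0, 1, 2 \pmod{4}$, a parallel antilinear endomorphism of the spinor bundle (the real or quaternionic structure of the complex spin representation) anticommutes with $D_A$, forcing $\sigma(D_A)$ to be symmetric about $0$ and hence equal to $\mathbb{R}$.
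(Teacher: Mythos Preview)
Your strategy coincides with the paper's: test spinors of the form (cutoff)$\times$(plane wave)$\times$(near-parallel spinor) on balls escaping to infinity, after a local gauge change making the vector potential small, with volume doubling controlling the normalization. Two technical differences are worth flagging.

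First, the paper does not attack $D_A^*D_A$ directly. It first shows that $W=\frac{\mathrm{scal}}{4}+\mathrm{i}B\cdot$ is $H_A$-relatively compact (since $W\to 0$ at infinity), whence $\sigma_{\mathrm{ess}}(D_A^*D_A)=\sigma_{\mathrm{ess}}(H_A)$, and only then runs the Weyl argument for $H_A$. Your direct route is fine, since $\mathcal{R}$ and $B\cdot$ are already $o(1)$ on your supports; this is a matter of bookkeeping.

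Second, and more substantively, the paper uses the \emph{resolvent} Weyl criterion, proving $\bigl\Vert(H_{\tilde A_n}+1)^{-1}(H_{\tilde A_n}-\lambda)\phi_n\bigr\Vert\to 0$ rather than the strong version. The reason is that after the gauge change only $|\tilde A_n|$ is known to be small, not $|\operatorname{div}\tilde A_n|$; by writing $H_{\tilde A_n}-H_0 = \mathrm{i}(\nabla+\mathrm{i}\tilde A_n)^*\tilde A_n - \mathrm{i}\tilde A_n\nabla$ and using that $(H_{\tilde A_n}+1)^{-1}(\nabla+\mathrm{i}\tilde A_n)^*$ is uniformly $L^2$-bounded, the unwanted derivative is absorbed for free. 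Your strong criterion $\Vert(D_{A'}^*D_{A'}-\lambda)\psi_n\Vert\to 0$ needs $|\operatorname{div} A'|=o(1)$ pointwise, which the radial Poincar\'e gauge does not deliver without a hypothesis on $\nabla B$. Your Coulomb-gauge fallback is correct in principle---on $B_{R_n}(p_n)$ one solves $dA'=B$, $\delta A'=0$ with suitable boundary data and a scaled Gaffney estimate gives $\Vert A'\Vert_{L^\infty}\lesssim R_n\Vert B\Vert_{L^\infty}$ with $\operatorname{div} A'=0$ exactly---but this step deserves to be spelled out rather than asserted. The paper's resolvent device is a slicker way around the same obstacle.
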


\begin{proof}First, we will argue that $W=\frac{\mathrm{scal}}{4}+{\mathrm{i}}B\cdot $ is relatively compact to $H_A$, in other words $W(H_A+1)^{-1}$ is a compact operator on $L^2$. The argument is mainly analogous to \cite[Example~6, p.~117 and Problem 41, p.~369]{RS4} but will be given here for completeness. We approximate $W=\frac{\mathrm{scal}}{4}+{\mathrm{i}}B\cdot\colon L^2\to L^2$, by $W_n = \chi_{B_n(p)} W$ where $\chi_{B_n(p)}$ is the characteristic function of the geodesic ball $B_n(p)\subset M$. Since the scalar curvature and the magnetic field $B$ tend to zero at infinity, $\lim_{n\to \infty} \|W- W_n\| \to 0$ in the operator norm. Let now $\phi_k$ be a bounded sequence in $\operatorname{dom} H_A=\{\phi\in L^2\mid H_A\phi\in L^2\}$ equipped with the graph norm. Then, $\phi_k$ is locally also bounded in~$H^1$ and hence locally strongly converges in~$L^2$ to some $\phi$ (after possibly taking a~subsequence). In particular, $W_n\phi_k$ converges to $W_n \phi=\chi_{B_n(p)} W \phi$ in $L^2$. This implies that~$W_n$ is compact as an operator from $\operatorname{dom}H_A$ (equipped with the graph norm) to $L^2$ and hence, that $W_n(H_A+1)^{-1}$ is compact. Since $\lim_{n\to \infty} \|W- W_n\| \to 0$, then also $W(H_A+1)^{-1}$ is compact. By \cite[Corollary 2, p.~113]{RS4}, we get $\sigma_{\text{ess}} (D_A^*D_A) = \sigma_{\text{ess}} (H_A)$.
	
Next, we want to show that $[0, \infty)\subset \sigma_{\text{ess}} (H_A)$. As in \cite[Step~3, p.~117]{CFKS}, we want to use the following fact. Let $T\geq 0$ be a self-adjoint operator on a Hilbert space $H$. Whenever there exists an orthonormal sequence $\psi_n$ in $H$ which converges weakly to zero and satisfies $\bigl\Vert (T+1)^{-1}(T-\lambda)\psi_n\bigr\Vert\to 0$ then $\lambda\in \sigma_{\text{ess}}(T)$.
		
Before we find the test functions $\psi_n$, we first exploit the condition that the magnetic field vanishes at infinity.
	
For any $p \in M$ we define $|p|=|\varphi(p)|$. Since $|B|\to 0$, there is a sequence of points $q_n\in M$ with $|q_n|\to \infty$ and $\sup_{x\in B_{2n}(q_n)} |B|\leq \frac{1}{n^2}$. By taking a subsequence, we may choose $q_n$ such that $|q_n|> 6n+R_o$.
	
Since the spinor bundle is trivial on $M\setminus K$, we can choose a spinor $\Phi$ that is constant with norm $1$ on $M\setminus K$. We note that by our assumptions on the decay of the metric, and hence for the Christoffel symbols corresponding to our coordinates, we get
\[\bigl| {\nabla}^S \Phi\bigr|(x)=\mathcal{O}\bigl(|x|^{-(\tau+1)}\bigr)\qquad \text{and}\qquad\bigl|\bigl(\nabla^S\bigr)^*\nabla^S \Phi\bigr|(x)= \mathcal{O}\bigl(|x|^{-(\tau+2)}\bigr)\] for $|x|\to \infty$ (see \cite[Section~4]{LM} for the spinorial connection $\nabla^S$ in local coordinates).
	
Now let $g_n\colon M\to [0,1]$ be a radial (with respect to $q_n$) smooth cut-off function which is constant on $B_{n}(q_n)$, $0$ on $M\setminus B_{2n}(q_n)$ and satisfies $| \nabla g_n| \leq \frac{2}{n}$ and $|\Delta g_n| \leq \frac{4}{n}$.
	
So far we know that $B$ is small on the support of $g_n$, but the term $A$ also appears in $H_A$ on its own. Next we choose another vector potential $A_n$ corresponding to the same magnetic field~$B$ which is small on $B_{n}(q_n)$.
	
In local coordinates, we have $B=B_{\mu\nu} {\rm d}x^\mu \wedge {\rm d}x^\nu$ with $B_{\mu\nu}=-B_{\nu\mu}$. Then $0={\rm d}^2A={\rm d}B= \partial_\kappa B_{\mu\nu} {\rm d}x^\kappa \wedge {\rm d}x^\mu \wedge {\rm d}x^\nu$. Thus,
\[ \partial_\kappa B_{\mu\nu} - \partial_\mu B_{\kappa\nu} -\partial_\nu B_{\mu\kappa}=0.\]
	
Using the asymptotically flat coordinates on $B_{2n}(q_n)$, shifted such that $q_n$ has coordinates zero, we define on $B_{2n}(q_n)$,
\begin{align*}
A_\mu^n(x) := -\sum_{\nu>\mu} \int_0^{x^\nu} B_{\mu\nu}\bigl(x^1, \ldots, x^{\nu-1}, t, 0, \ldots, 0\bigr)  {\rm d}t.
\end{align*}	
Then
\begin{align*}
\partial_\kappa A_\mu^n(x) ={} & -\sum_{\nu>\min \{\mu, \kappa\}} \int_0^{x^\nu} \partial_\kappa B_{\mu\nu}\bigl(x^1, \ldots, x^{\nu-1}, t, 0, \ldots, 0\bigr) {\rm d}t\\& + \delta_{\mu>\kappa} B_{\mu\kappa}\bigl(x^1, \ldots, x^{\kappa}, 0, \ldots, 0\bigr). \end{align*}
Thus for $\kappa<\mu$,
\begin{align*}
\partial_\kappa A_\mu^n(x) - \partial_\mu A_\kappa^n(x)
={} & -\sum_{\nu> \kappa} \int_0^{x^\nu} \underbrace{(\partial_\kappa B_{\mu\nu} -\partial_\mu B_{\kappa\nu})}_{=\partial_\nu B_{\mu\kappa}}\bigl(x^1, \ldots, x^{\nu-1}, t, 0, \ldots, 0\bigr) {\rm d}t\\& + B_{\mu\kappa}\bigl(x^1, \ldots, x^{\kappa}, 0, \ldots, 0\bigr) \\
={} &- \sum_{\nu> \kappa} \bigl( B_{\mu\kappa}\bigl(x^1, \ldots, x^{\nu}, 0, \ldots, 0\bigr)- B_{\mu\kappa}\bigl(x^1, \ldots, x^{\nu-1}, 0, \ldots, 0\bigr) \bigr)\\&
+ B_{\mu\kappa}\bigl(x^1, \ldots, x^{\kappa}, 0, \ldots, 0\bigr) \\ ={}& -B_{\mu\kappa}(x)=B_{\kappa\mu}(x).
\end{align*}
Thus $A^n=A^n_\mu {\rm d}x^\mu$ is an equivalent potential to $A$ on $B_{2n}(q_n)$ and by construction it satisfies $|A|\leq c\frac{2}{n}$ for some uniform constant $c>0$.
		
Hence, by the Poincar\'e lemma, $A^n-A={\rm d}f_n$ for a smooth function $f_n$ on $B_{2n}(q_n)$. Let $\tilde{f}_n$ be a smooth function on all of $M$ that coincides with $f_n$ on $B_{n}(q_n)$ and set $\tilde{A}^n=A+{\rm d}\tilde{f}_n$. Then
\[ H_A= {\rm e}^{{\mathrm{i}}\tilde{f}_n} H_{\tilde{A}_n}{\rm e}^{-{\mathrm{i}}\tilde{f}_n}.\]
For $k\in {\mathbb R}^m$, we define $\phi_n= {\rm e}^{{\mathrm{i}}k_\mu x^\mu} {g}_n\Phi$. By our assumptions on $g_n$ and $\Phi$, we have
\begin{equation*}
\| \phi_n\|^2 \geq C_o \operatorname{vol}  B_{n}(q_n)
\end{equation*}
for a uniform constant $C_o$. Setting $\lambda = |k|^2$, we obtain
\begin{align*} (H_0 -\lambda)\phi_n(x) ={} & \bigl((\Delta_g -\lambda) \bigl(\tilde{g}_n{\rm e}^{{\mathrm{i}}k_\mu x^\mu}\bigr)\bigr) \Phi - 2 \nabla^S_{\nabla (\tilde{g}_n {\rm e}^{{\mathrm{i}}k_\mu x^\mu} )} \Phi + \tilde{g}_n \bigl(\nabla^S\bigr)^*\nabla^S \Phi\\
	={} & \bigl({\rm e}^{{\mathrm{i}}k_\mu x^\mu}\Delta_g \tilde{g}_n + 2\bigl\langle \nabla \tilde{g}_n, \nabla {\rm e}^{{\mathrm{i}}k_\mu x^\mu}\bigr\rangle + \tilde{g}_n (\Delta_g - |k|^2) {\rm e}^{{\mathrm{i}}k_\mu x^\mu} \bigr) \Phi\\&
- 2 \nabla^S_{\nabla (\tilde{g}_n {\rm e}^{{\mathrm{i}}k_\mu x^\mu})} \Phi + \tilde{g}_n \bigl(\nabla^S\bigr)^*\nabla^S \Phi.\end{align*}
By the decay estimates of $\Delta_g, \nabla \tilde{g}_n$ and the uniform upper and lower bound on $\Phi$ and $g_n$, we get
\begin{align*}
\bigl\|\bigl({\rm e}^{{\mathrm{i}}k_\mu x^\mu}\Delta_g \tilde{g}_n + 2\bigl\langle \nabla \tilde{g}_n, \nabla e^{{\mathrm{i}}k_\mu x^\mu}\bigr\rangle \bigr) \Phi\bigr\|^2 \leq \frac{C}{n} \operatorname{vol}  B_{2n}(q_n)\leq \frac{C}{n} \operatorname{vol}  B_{n}(q_n) \leq \frac{C}{n} \| \phi_n\|^2,
\end{align*}
where $C$ is a constant that changes from line to line but is always uniform in $n$. Similarly, the upper bounds on the derivatives of $\Phi$ give
\begin{align*}
\bigl\| \nabla^S_{\nabla  (\tilde{g}_n {\rm e}^{{\mathrm{i}}k_\mu x^\mu} )} \Phi + \tilde{g}_n \bigl(\nabla^S\bigr)^*\nabla^S \Phi \bigr\|^2
\leq \frac{C}{n^{(\tau+1)}} \operatorname{vol}  B_{2n}(q_n)\leq \frac{C}{n^{(\tau+1)}} \operatorname{vol}  B_{n}(q_n) \leq \frac{C}{n} \| \phi_n\|^2.
\end{align*}	
Moreover, \[ \bigl(\Delta_g - |k|^2\bigr) {\rm e}^{{\mathrm{i}}k_\mu x^\mu} = {\rm e}^{{\mathrm{i}}k_\mu x^\mu} \bigl(k_\mu k_\nu \bigl(g^{\mu\nu} -\delta^{\mu\nu}\bigr) -{\mathrm{i}}k_\mu g^{\mu\nu}_{,\nu} - {\mathrm{i}}k_\nu g^{\lambda \nu}\partial_\lambda (\ln \det   g)\bigr).\]
By the asymptotic flatness of $M$ we get
\[ \bigl|\bigl(\Delta_g - |k|^2\bigr) {\rm e}^{{\mathrm{i}}k_\mu x^\mu}\bigr|=\mathcal{O}\bigl(|x|^{-{\tau}}\bigr).\]
Together with the volume estimate above and the choice of $q_n$, this implies
\[\bigl\Vert \bigl(\Delta_g - |k|^2\bigr) {\rm e}^{{\mathrm{i}}k_\mu x^\mu}\bigr\Vert^2 \leq C (|q_n|-2n)^{-\tau} \operatorname{vol}  B_{2n}(q_n) \leq C n^{-\tau} \| \phi_n\|^2
\]
in a similar way as above.
Combining all of the above we get that for any $\epsilon>0$ and we can find~$n$ large enough such that{\samepage
\[
\Vert (H_0 -\lambda)\phi_n\Vert \leq \epsilon \| \phi_n\|.
\]
Set $\psi_n(x) := {\rm e}^{{\mathrm{i}}\tilde{f}_n(x)} {\rm e}^{{\mathrm{i}}k_\mu x^\mu} {g}_n(x)\Phi / \|\phi_n\|$. By construction, $\psi_n$ is an orthonormal sequence.}
	
We have
\[\bigl\Vert (H_A+1)^{-1}(H_A-\lambda)\psi_n\bigr\Vert = \bigl\Vert (H_{\tilde{A}_n}+1)^{-1}(H_{\tilde{A}_n}-\lambda)\phi_n\bigr\Vert.\]
Using $ H_{\tilde{A}_n} = H_0 -2{\mathrm{i}}  \nabla_{\tilde{A}_n} -{\mathrm{i}}  \operatorname{div} \tilde{A}_n +|\tilde{A}_n|^2 =H_0 +{\mathrm{i}}  (\nabla +{\mathrm{i}}\tilde{A}_n)^* \tilde{A}_n - {\mathrm{i}}\tilde{A}_n\nabla$,
we obtain	
\begin{align*}
\bigl\Vert (H_A+1)^{-1}&(H_A-\lambda)\psi_n\bigr\Vert
\leq\bigl\Vert (H_{\tilde{A}_n}+1)^{-1}\bigr\Vert \cdot \Vert (H_0 -\lambda)\phi_n\Vert\\
& +\bigl\Vert (H_{\tilde{A}_n}+1)^{-1} \bigl(\nabla +{\mathrm{i}}\tilde{A}_n\bigr)^*\bigr\Vert\cdot \bigl\Vert \bigl|\tilde{A}_n\bigr| \tilde{g}_n\bigl\Vert
+\bigl\Vert (H_{\tilde{A}_n}+1)^{-1} \bigr\Vert \cdot \bigl\Vert \bigl|\tilde{A}_n\bigr| \nabla (\tilde{g}_n \Phi)\bigr\Vert.
\end{align*}
Since $H_{\tilde{A}_n}$ is a nonnegative operator, by the spectral theorem $(H_{\tilde{A}_n}+1)^{-1}$ is bounded on $L^2$ uniformly in $n$. Similarly, we can show that $(H_{\tilde{A}_n}+1)^{-1} H_{\tilde{A}_n}$ is bounded on $L^2$, uniformly in $n$. Thus, also $(H_{\tilde{A}_n}+1)^{-1} (\nabla +{\mathrm{i}}\tilde{A}_n)^* $ is bounded on $L^2$ uniformly in $n$. Since $\tilde{A}_n|_{B_{2n}(q_n)}\to 0$ as $n\to \infty$ and using again the properties of $\tilde{g_n}$, $\Phi$ as before, we obtain $\bigl\Vert (H_A+1)^{-1}(H_A-\lambda)\psi_n\bigr\Vert \to 0$ and, hence, that $\lambda\in \sigma_{\mathrm{ess}} (H_A)$.
	
The remaining claim on the spectrum of $D_A$ follows from to the symmetry of the spectrum in these dimensions. The symmetry of the spectrum $D$ itself, follows for $m$ even by $\omega \cdot D\phi =-D(\omega\cdot \phi)$ by $\omega= e_1\cdot e_2 \cdots   e_{\dim M}$, and for $m\equiv 1$ mod $4$ by the existence of a Spin($m$)-equivariant real structure that anticommutes with Clifford multiplication, cf.\ \cite[Proposition, p.~31]{Fr}. Since in even dimensions $\omega\cdot$ also anticommutes with Clifford multiplication as does the real structure just mentioned, $D_A$ anticommutes with these maps as well and has, as a result, symmetric spectrum.\looseness=-1
\end{proof}

\subsection{Dense eigenvalues}
In \cite{MS}, see also \cite[Theorem~6.2]{CFKS}, an example of a magnetic potential $A$, such that the magnetic Schr\"odinger operator $(\nabla + {\mathrm{i}}A)^*(\nabla +{\mathrm{i}}A)$ on functions has pure point spectrum which is dense in $[0, \infty)$ was given. The magnetic potential used there was	
\[
A=-\frac{y}{(1+\rho)^\gamma} {\rm d}x+\frac{x}{(1+\rho)^\gamma} {\rm d}y,\]
 where $\rho=\rho(x,y)=\bigl(x^2+y^2\bigr)^\frac{1}{2}$ and $\gamma\in (0,1)$. We will prove here that this phenomenon still occurs for the Dirac operator (with the same magnetic potential). We note that \big(on ${\mathbb R}^2$\big) $D_A^*D_A =(\nabla + {\mathrm{i}}A)^*(\nabla +{\mathrm{i}}A) + {\mathrm{i}} {\rm d}A\cdot$, therefore one cannot deduce the spectrum directly from the result on the Schr\"odinger operator. But we can run an argument using similar ideas. In \cite[Theorem~6.2]{CFKS}, it is used that $(\nabla + {\mathrm{i}}A)^*(\nabla +{\mathrm{i}}A)$ commutes with the orbital angular momentum which is not true for $D_A$. But this can be circumvented by adding the spin angular momentum as we will see in the following result.

\begin{Theorem}\label{thm_dense} On $\mathbb R^2$ we consider the magnetic potential $A$ provided above.
Then, the magnetic Dirac operator $D_A= D+\mathrm{i}A$ has pure point spectrum which is dense in ${\mathbb R}$.
\end{Theorem}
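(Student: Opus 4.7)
The plan is to use the rotational invariance of $A$ to decompose $D_A$ along total angular momentum eigenspaces, verify discrete spectrum on each sector, and then construct quasimodes in each sector to make the union of spectra dense in $\mathbb R$.

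First, I would pass to polar coordinates: $A = a(\rho)\,{\rm d}\theta$ with $a(\rho) = \rho^2/(1+\rho)^\gamma$, which is manifestly rotationally invariant. Consequently $D_A$ commutes with the total angular momentum $J = L + S$, where $L = -{\mathrm{i}}(x\partial_y - y\partial_x)$ is the orbital angular momentum and $S$ is the generator of the induced $\mathrm{Spin}(2)$ action on $\mathbb C^2$-valued spinors. (Note that, as pointed out in the paragraph before the theorem, $L$ alone does not commute with $D_A$; the spin term is needed.) Decomposing $L^2(\mathbb R^2,\mathbb C^2) = \bigoplus_{k\in \mathbb Z+1/2} \mathcal H_k$ into $J$-eigenspaces yields $D_A = \bigoplus_k D_A^{(k)}$, with each $D_A^{(k)}$ unitarily equivalent to a first-order $2\times 2$ ODE system on $L^2\bigl((0,\infty), \rho\,{\rm d}\rho\bigr)$.

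Next, I would show that each $D_A^{(k)}$ has purely discrete spectrum. Since $\mathcal R = 0$ on flat $\mathbb R^2$, Lemma~\ref{lem_DAsquare} gives $D_A^* D_A = (\nabla^S + {\mathrm{i}}A)^* (\nabla^S + {\mathrm{i}}A) + {\mathrm{i}}\,{\rm d}A\cdot$. On $\mathcal H_k$ this reduces to a radial Schr\"odinger-type operator whose effective potential is of order $|A|^2 \sim \rho^{2-2\gamma}$ at infinity; since $\gamma < 1$, this term dominates the centrifugal/kinetic contributions and the bounded perturbation ${\mathrm{i}}\,{\rm d}A\cdot$ (which even vanishes at infinity). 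A standard Persson-type argument then yields compact resolvent for $(D_A^{(k)})^2$, hence also for $D_A^{(k)}$. Since $\bigoplus_k \mathcal H_k$ exhausts $L^2(\mathbb R^2, \mathbb C^2)$, the operator $D_A$ has pure point spectrum.

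For density, fix $\lambda > 0$ and $\epsilon > 0$. The reduced effective potential on $\mathcal H_k$ is, to leading order, $V_k(\rho) = (c_k - a(\rho))^2/\rho^2$ with $c_k$ growing linearly in $k$; it attains its minimum value $0$ at $\rho_k^* = a^{-1}(c_k)$ with quadratic curvature $V_k''(\rho_k^*) \sim (\rho_k^*)^{-2\gamma} \to 0$ as $|k|\to\infty$. For $|k|$ large I would pick $n_k$ so that the $n_k$-th harmonic-oscillator eigenvalue at $\rho_k^*$ lies within $\epsilon^2$ of $\lambda^2$, and build a Hermite-Gaussian radial quasimode $\psi_k \in \mathcal H_k$ concentrated at $\rho_k^*$ on the scale $V_k''(\rho_k^*)^{-1/4}$. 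A WKB computation would then yield $\|(D_A^{(k)} - \lambda)\psi_k\| \leq \epsilon\|\psi_k\|$, forcing an eigenvalue of $D_A^{(k)}$ into $[\lambda-\epsilon, \lambda+\epsilon]$. Varying $\lambda$ gives density of $\sigma(D_A)$ in $[0,\infty)$; combined with the spectral symmetry $\sigma(D_A) = -\sigma(D_A)$ established at the end of the proof of Theorem~\ref{thm_mag_zero}, density in all of $\mathbb R$ follows.

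The main obstacle is the quasimode step: compared to the scalar magnetic Schr\"odinger setting of \cite{MS, CFKS}, the square $D_A^2$ carries the extra Clifford term ${\mathrm{i}}\,{\rm d}A\cdot$, and moreover one must approximately diagonalize $D_A$ itself on $\psi_k$, not just its square. The delicate part is to choose the widths and relative phases of the two spinor components of $\psi_k$ so their WKB expansions match to $o(1)$, while remaining localized in the harmonic zone of $V_k$ and absorbing the $|{\rm d}A|(\rho_k^*)\sim(\rho_k^*)^{-\gamma}$ correction into the error.
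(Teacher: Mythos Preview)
Your approach to showing pure point spectrum matches the paper's: both decompose $D_A$ along eigenspaces of the total angular momentum $J_z = L_z + S_{xy}$ (using that $[D_A, J_z]=0$ while $L_z$ alone fails to commute with $D_A$), and both observe that on each sector $D_A^*D_A$ reduces to a Schr\"odinger-type operator whose scalar potential is dominated at infinity by $|A|^2 \sim \rho^{2-2\gamma} \to \infty$, forcing discrete spectrum sector by sector.

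For density, however, the paper takes a much shorter route than your quasimode construction. Since $|B| = |{\rm d}A| \to 0$ at infinity, Theorem~\ref{thm_mag_zero} applies directly and gives $\sigma_{\mathrm{ess}}(D_A^*D_A) = [0,\infty)$; together with the spectral symmetry (which you already invoke) this yields $\sigma_{\mathrm{ess}}(D_A) = \mathbb{R}$. Since the spectrum has just been shown to be pure point, the eigenvalues must be dense. Your WKB/harmonic-oscillator program is viable in principle---it is a hands-on replay, sector by sector, of what Theorem~\ref{thm_mag_zero} establishes in one stroke---but it duplicates that work and introduces the ``main obstacle'' you yourself flag (matching the two spinor components to approximate $D_A$ rather than $D_A^2$). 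Even within your route, that obstacle can be sidestepped: build scalar quasimodes for $T_k^\pm = (D_A^{(k)})^2$ restricted to each spin component (this is exactly the Miller--Simon computation), deduce density of $\sigma(D_A^2)$ in $[0,\infty)$, and then use the symmetry $\sigma(D_A)=-\sigma(D_A)$. You already cite Theorem~\ref{thm_mag_zero} for the symmetry; invoking its full conclusion eliminates the hardest part of your outline altogether.
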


\begin{proof}We consider the total angular momentum $J_z=L_z+S_{xy}$, where \smash{$L_z=-\mathrm{i}\bigl(x\frac{\partial}{\partial y}- y\frac{\partial}{\partial x}\bigr)$}
is the orbital angular momentum and $S_{xy}=\frac{\mathrm{1}}{2}\operatorname{diag}(1, -1)$ is the spin angular momentum. Direct calculation gives $[D_A, J_z]=0$ (but $L_z$ itself does not commute with $D_A$).
	
Since $\operatorname{div} A=0$, we have
\[ D_A^*D_A= \nabla^*\nabla + |A|^2 - 2{\mathrm{i}}  \nabla_A + {\mathrm{i}}  {\rm d}A(e_1,e_2)e_1\cdot e_2\cdot . \]	
Using ${\mathrm{i}}  e_1\cdot e_2 \cdot = \operatorname{diag}(1,-1)$, ${\rm d}A=B {\rm d}x\wedge {\rm d}y$ with $B= -\frac{\gamma \rho}{(1+\rho)^{\gamma +1}} + \frac{2}{(1+\rho)^{\gamma}}$ and $-{\mathrm{i}}\nabla_A =\frac{1}{(1+\rho)^{\gamma}}L_z$ this gives	
\[ D_A^*D_A = \Delta + \frac{2}{(1+\rho)^{\gamma}}J_z + \frac{\rho^2}{(1+\rho)^{2\gamma}} + \left(B -\frac{1}{(1+\rho)^{\gamma}}\right)\operatorname{diag}(1,-1).\]
The operator $J_z$ has discrete spectrum. We denote the eigenvalues by $\mu_m$, $m\in \mathbb Z$. The discreteness of the spectrum for $J_z$ allows us to write $H=L^2\bigl({\mathbb R}^2, {\mathbb C}^2\bigr)$ as the direct sum of the eigenspaces $E_m$ corresponding to the eigenvalue $\mu_m$. At the same time, the fact that $[D_A, J_z]=0$ allows one to claim that $D_A$ restricts to $E_m$. Then,
\[ D_A^*D_A|_{E_m}= \operatorname{diag}\bigl(T^+_m, T^-_m\bigr) \]
with $T^\pm_m= \Delta + \frac{2}{(1+\rho)^{\gamma}}\mu_m + \frac{\rho^2}{(1+\rho)^{2\gamma}} \pm \bigl(B -\frac{1}{(1+\rho)^{\gamma}}\bigr)$.
Thus, $T_m^\pm$ is an operator on $L^2\bigl({\mathbb R}^2, {\mathbb C}\bigr)$ of Schr\"odinger-type, in other words the Laplacian on functions plus a scalar potential. For $\gamma\in (0,1)$, the term \smash{$\frac{\rho^2}{(1+\rho)^{2\gamma}}$} is the dominating term of this potential at infinity. Thus, this potential goes to infinity at infinity and hence, $T_m^\pm$ and thus also $ D_A^*D_A|_{E_m}$ have discrete spectrum. If $D_A^*D_A$ has discrete spectrum, so does $D_A$ (see, e.g., \cite[Lemma~B.8]{AG}).\looseness=-1
	
On the other hand, $|B|\to 0$ as $|x|\to \infty$. By Theorem~\ref{thm_mag_zero}, this implies $[0,\infty)=\sigma_{\mathrm{ess}} (D_A^*D_A)$. Over ${\mathbb R}^2$, Clifford multiplication on spinors anticommutes with the map $\beta\colon (\phi_1, \phi_2)\in {\mathbb C}^2\mapsto (\overline{\phi_1}, \overline{\phi_2})$, see \cite[Section~1.7]{Fr}. Thus, $D_A(\beta(\psi))=-\beta(D_A\psi)$ which implies that the spectrum of $D_A$ is symmetric and, thus, $\sigma_{\mathrm{ess}} (D_A)={\mathbb R}$. Hence, the pure point spectrum above must be dense.
\end{proof}	

\subsection*{Acknowledgements} The authors thank Gilles Carron for asking whether for magnetic Dirac operators the spectrum can consist of a pure point spectrum that is dense in ${\mathbb R}$, which led to this article. They would also like to thank the anonymous referees whose comments helped to improve the quality of this paper.

\pdfbookmark[1]{References}{ref}
\LastPageEnding

\end{document}